\documentclass[10pt]{amsart}

\usepackage{amsmath, amsfonts, amssymb, amsthm, graphics}

\newtheorem{theorem}{Theorem}[section]
\newtheorem{lemma}[theorem]{Lemma}
\newtheorem{corollary}[theorem]{Corollary}

\theoremstyle{definition}

\newtheorem{remark}[theorem]{Remark}

\numberwithin{equation}{section}

\begin{document}

\title[Regularly oscillating mappings]{Regularly oscillating mappings between metric spaces and a theorem of Hardy and Littlewood}

\author{Marijan Markovi\'{c}}

\begin{abstract}
This paper is motivated by the classical theorem  due  to  Hardy  and Littlewood which concerns analytic mappings on the unit disk and relates the growth of the derivative with  the H\"{o}lder
continuity. We obtain  a version of this  result  in a very  general setting -- for regularly oscillating mappings on a metric space equipped  with a weight, which is a continuous and positive
function, with values in another  metric space. As a consequence, we derive the Hardy and Littlewood theorem for analytic mappings on the  unit  ball of a  normed  space.
\end{abstract}

\address{
Faculty of Sciences and Mathematics\endgraf
University of Montenegro\endgraf
D\v{z}ord\v{z}a Va\v{s}ingtona bb\endgraf
81000 Podgorica\endgraf
Montenegro\endgraf}

\email{marijanmmarkovic@gmail.com}

\subjclass[2020]{Primary 26A16, 51F30, 46E50; Secondary 46G20, 30L15, 54E45}

\keywords{Hardy  and Littlewood theorem, regularly oscillating mappings, the distance function, uniform domains, mappings between metric spaces, analytic       mappings   between normed spaces,
Lipschitz type spaces, Bloch type spaces}

\maketitle

\section{Introduction}

Let   $\mathbb{D}\subseteq\mathbb{C}$ be the unit disk,   and $\alpha\in (0,1]$. The classical Hardy and Littlewood theorem \cite{HARDY.MZ} states that an analytic mapping $f$  on $\mathbb{D}$
satisfies
\begin{equation}\label{EQ.HL.1}
| f'(z) |\le C_f(1- |z|)^{\alpha-1},\quad  z\in \mathbb{D},
\end{equation}
if  and only if it is H\"{o}lder continuous, i.e.,
\begin{equation}\label{EQ.HL.2}
|f(z) - f (w)| \le \ C'_f |z-w|^\alpha,\quad z,w\in\mathbb{D},
\end{equation}
where  $C_f$ and $C'_f$ are constants (may depend only on the mapping  $f$). This theorem  may be found in  the Duren book \cite[Theorem 5.1]{DUREN.BOOK}, where the given proof is based on the
Cauchy integral formula.  The estimates of the form  $C'_f \le A C_f$ and $C_f\le B C'_f$,  where  $A,B>0$ are absolute constants, are also  known.

This classical result has counterparts for other mapping classes.     The theorem has a version for quasi-conformal mappings on uniform domains in $\mathbb{R}^n$ obtained by Astala and Gehring
\cite{ASTALA.MMJ}.   An analog for quasi-regular  mappings on Lip$_\alpha$-extension  domains is given  in the  Nolder paper  \cite{NOLDER.TAMS}.

Gehring and Martio \cite{GEHRING.CV} obtained a generalization of one part of the  Hardy and Littlewood theorem for (not necessarily analytic)      mappings on uniform domains, and    somewhat
later on Lip$_\alpha$-extension domains \cite{GEHRING.AASF}. In their work for (analytic) mappings on a  domain $D\subseteq\mathbb{C}$,          the  condition  \eqref{EQ.HL.1} is  replaced by
\begin{equation*}
|f'(z)| \le C_f d(z,\partial D)^{\alpha-1},\quad  z\in D,
\end{equation*}
where   $d(z,\partial D)$, is the distance  function, i.e.,
\begin{equation*}
d(z,B) = \inf_{x\in B} |x - z|,\quad z\in\mathbb{C},\quad B\subseteq\mathbb {C},  B\ne \emptyset.
\end{equation*}

Immediately after that,  Lappalainen in his Ph.D. thesis                 \cite{LAPPALAINEN.AASF}  gave a  generalization of the same  part of the Hardy and Littlewood theorem   for mappings on
Lip$_\varphi$-extension  domains,  where  $\varphi$  is a  majorant. A function  $\varphi\in C[0,\infty)$  is  called  the  majorant if it has the  following properties:

(1) $\varphi (0)  =  0$; $\varphi (t)>0$, $t\in (0,\infty)$;

(2) $\varphi$ is an increasing function on $(0,\infty)$;

(3) $\varphi'$ is a decreasing function on $(0,\infty)$.

\noindent These  properties imply   the following four (more important):

(a) $\varphi' (t)\le \frac {\varphi (t)}{t}$, $t\in (0,\infty)$;

(b) $\frac {\varphi (t)}{t}$ is decreasing;

(c) $\varphi (c t) \le c\varphi (t)$, $t\in (0,\infty)$,  $c> 1$;

(d) $\varphi(t+s) \le \varphi(t) + \varphi(s)$, $t,s\in (0,\infty)$.

A domain    $D$          is called a Lip$_\varphi$-extension domain \cite{LAPPALAINEN.AASF} if there exists a constant  $M$  such that for  every  $x, y\in D$ there  exists a rectifiable curve
$\gamma_{x,y}\subseteq  D$  with ends   at  $x$ and $y$ such that
\begin{equation}\label{EQ.LAPPALAINEN.COND}
\int _{\gamma_{x,y}} \frac{\varphi (d(z,\partial D))}{d(z,\partial D)} |dz|  \le M  \varphi ( |x-y|).
\end{equation}
For the standard majorant $\varphi(t)=\varphi_\alpha (t) = t^\alpha$, $\alpha\in (0,1]$,  one says     that $D$ is a Lip$_\alpha$-extension domain (the class of domains  introduced  previously
by Gehring  and Martio \cite{GEHRING.AASF}).

The variant of the Hardy and Littlewood theorem obtained by     Lappalainen \cite[Theorem 7.3]{LAPPALAINEN.AASF}      states that if for an analytic mapping on a Lip$_\varphi$-extension domain
$D\subseteq \mathbb{C}$ holds
\begin{equation}\label{EQ.LAPPALAINEN.1}
| f'(z) |\le  C_f\frac{\varphi (d (z,\partial D))}{d(z,\partial D)},\quad  z\in D,
\end{equation}
then $f$ satisfies
\begin{equation}\label{EQ.LAPPALAINEN.2}
|f(z) - f (w)| \le C'_f \varphi ( |z-w|),\quad z,w\in D,
\end{equation}
where  $ C_f$ and $ C'_f$  are constants.

The first condition displayed above is local,   and  the second one is of the global  nature.    In this  paper  we firstly examine, in the context of metric spaces, when does  a similar local
condition imply the corresponding global one. The condition \eqref{EQ.LAPPALAINEN.1} includes the distance function,  which we  shall replace  by a weight, that is a positive    and continuous
function, on a metric space. Then we consider the converse question, i.e., what can locally be said  about a mapping  which  satisfy  the condition \eqref{EQ.LAPPALAINEN.2} adapted  for metric
spaces. We  shall    generalize  separately the  two parts   of the classical theorem.   As  a consequence of our main results,  we are      able to derive the Hardy and Littlewood theorem for
analytic mappings  on the  unit ball   of a normed  space with values in an  another  normed  space.  

\section{Preliminaries}

This section contains preliminaries on mapping classes we shall encounter in the rest of this paper,                as well as  some facts concerning  the weighted distance on a metric   space.

As we have  already said,     an  everywhere positive and continuous function on  a  metric space  is called  the  weight  function.     Below  we introduce  the class of regularly oscillating
mappings between metric spaces with a weight  on the first one.

Let $(X,d_X)$ and $(Y, d_Y)$ be  metric  spaces.  For a  mapping $f:X\to  Y$  we define
\begin{equation*}
D^\ast f (x) = \limsup _{y\to x} \frac {d_Y ( f(x), f(y))}{d_X(x,y)},   \quad x\in X;
\end{equation*}
we  set $D^\ast  f (x) = 0$ for   an isolated $x\in X$.  The mapping  class $D^\ast(X,Y)$ contains $f:X\to Y$  such that   $D^\ast f (x)$ is finite for every  $x\in X$.

Assume moreover that $w$ be a weight on $X$. A mapping $f\in D^\ast(X,Y)$ which is bounded on every open ball $B_X(x,r)$, $x\in X$, $0<r<w(x)$,  is said to be regularly oscillating   if  there
exists   a  constant $K = K_f$ such that
\begin{equation}\label{EQ.RO}
D^\ast f  (x)  \le  \frac  {K} r {\sup_{y\in B_X(x,r)} d_Y ( f(x),  f(y) )},\quad x\in X, \quad 0<r<w(x).
\end{equation}
The corresponding    mapping class   is denoted by $\mathrm {RO}_w^K(X,Y)$.

In case of differentiable mapping between normed spaces  $D^\ast f(x)$ coincides with the operator norm of the differential $Df(x)$.   A mapping $f:D\to Y$,  where $D \subseteq X$ is a  domain
in a normed space $X$ (over $\mathbb{R}$ or $\mathbb{C}$), and $Y$  is another normed space, is  Fr\'{e}chet differentiable   at $x\in D$, if there exists a continuous linear  mapping $D f(x):
X\to Y$, called  Fr\'{e}chet  differential, such that in an open  ball  $B_X(x,r)=\{y\in X: \|x-y\|_X <r\}\subseteq D$ there holds
\begin{equation*}
f(y) -  f(x) = D f (x)  (y-x) +  \alpha (y-x),\quad y\in B_X(x,r),
\end{equation*}
where  $\alpha:B_X (0,r)\to  Y$ satisfies  $\lim _{h\to  0} \frac {\|\alpha (h)\|_Y}{\|h\|_X}  =  0$. We denote  by
\begin{equation*}
\|D f (x)\| = \sup _{\|\zeta\|_X = 1} \|D f(x)\zeta\|_Y
\end{equation*}
the  operator  norm of  $ D f(x): X\to Y$.  The  proof that $D^\ast f (x)$ is  equal   to the norm of Frech\'{e}t  differential $D f(x):X\to Y$  may be found in \cite{MARKOVIC.JGA}.  Therefore,
we have
\begin{equation*}
\| D   f (x)\|    =  \limsup_{y\to  x} \frac {\|f(x) - f(y)\|_Y}{\|x-y\|_X}.
\end{equation*}

If  $X$ and $Y$ are normed spaces over the field  $\mathbb {C}$, then a mapping $f:D\to Y$ is said  to be analytic on a domain $D\subseteq X$  if  it is  Fr\'{e}chet differentiable  everywhere
in   $D$.       The class  of regularly oscillating mapping on a domain in a normed with respect to the distance function of the domain contains   the class of bounded analytic mappings on $D$.
This may be easily  seen by using the  Schwarz lemma. For the sake of completeness we prove this in the lemma below.

\begin{lemma}
Every bounded analytic mapping $f:D\to Y$, where $D$ is domain a normed  space $X$,  and $Y$ is another normed space, belongs to the class  $\mathrm {RO}^1_{d}(D,Y)$, where $d$ is the distance
function on $D$, i.e.,  $d(z) = d(z,\partial D) = \inf_{w\in \partial D} \|z-w\|$,  $z\in D$.
\end{lemma}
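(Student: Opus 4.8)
The plan is to verify the defining inequality \eqref{EQ.RO} with $K=1$ and $w=d$ directly, reducing the estimate to the classical (scalar) Schwarz lemma by slicing $D$ along complex lines. Since $f$ is Fr\'{e}chet differentiable on $D$, the cited identity gives $D^\ast f(x)=\|Df(x)\|$ for every $x\in D$; in particular $D^\ast f(x)<\infty$, so $f\in D^\ast(D,Y)$, and boundedness of $f$ makes it bounded on every ball, so the structural hypotheses defining the class are met. It thus remains to bound $\|Df(x)\|$ by $r^{-1}\sup_{y\in B_X(x,r)}\|f(x)-f(y)\|_Y$ for each $x\in D$ and each $0<r<d(x)$.

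Fix such $x$ and $r$, and fix a unit vector $\zeta\in X$. Because $r<d(x)=d(x,\partial D)$, the affine map $\lambda\mapsto x+\lambda r\zeta$ carries the unit disk $\mathbb{D}$ into $B_X(x,r)\subseteq D$, so $g(\lambda)=f(x+\lambda r\zeta)$ defines an analytic mapping $g:\mathbb{D}\to Y$, bounded since $f$ is. By the chain rule $g'(0)=Df(x)(r\zeta)=r\,Df(x)\zeta$, and setting $h=g-g(0)$ we have $h(0)=0$ together with $\sup_{\lambda\in\mathbb{D}}\|h(\lambda)\|_Y\le M$, where $M:=\sup_{y\in B_X(x,r)}\|f(x)-f(y)\|_Y$.

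The heart of the argument is a vector-valued Schwarz lemma asserting $\|h'(0)\|_Y\le M$. I would deduce it from the scalar case by Hahn--Banach: for every functional $\ell\in Y^\ast$ with $\|\ell\|\le 1$ the composition $\ell\circ h:\mathbb{D}\to\mathbb{C}$ is analytic, vanishes at $0$, and is bounded in modulus by $M$, so the classical Schwarz lemma yields $|\ell(h'(0))|=|(\ell\circ h)'(0)|\le M$; taking the supremum over such $\ell$ and using $\|v\|_Y=\sup_{\|\ell\|\le1}|\ell(v)|$ gives $\|h'(0)\|_Y\le M$. Hence $r\|Df(x)\zeta\|_Y=\|g'(0)\|_Y\le M$, and taking the supremum over unit vectors $\zeta$ produces $\|Df(x)\|\le M/r$, which together with $D^\ast f(x)=\|Df(x)\|$ is exactly \eqref{EQ.RO} with $K=1$. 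The only genuinely nontrivial point is this passage from the scalar to the vector-valued Schwarz lemma; once the functional reduction is in place, everything else is bookkeeping with the chain rule and the supremum over directions.
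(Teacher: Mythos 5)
Your proof is correct, and at its core it follows the same strategy as the paper: compose $f$ with an affine rescaling onto a unit ball, center by subtracting the value of $f$ at the point in question, and apply a Schwarz-type estimate to bound $\|Df(x)\|$ by $r^{-1}\sup_{y\in B_X(x,r)}\|f(x)-f(y)\|_Y$, finally identifying $D^\ast f(x)$ with $\|Df(x)\|$. The difference lies in how the Schwarz estimate is obtained. The paper works with $g(\zeta)=\frac1M f(z+r\zeta)$ defined on the unit ball $B_X(0,1)$ of the whole space $X$ and invokes ``the Schwarz lemma'' for analytic mappings between normed spaces as a known fact to conclude $\|Dg(0)\|\le 1$, then recovers the oscillation bound by the replacement $f\mapsto f-f(z)$. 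You instead reduce everything to the classical one-variable scalar Schwarz lemma: you slice $D$ along the complex line $\lambda\mapsto x+\lambda r\zeta$ to get an analytic map $g:\mathbb{D}\to Y$, and you reduce the target to $\mathbb{C}$ by composing with functionals $\ell\in Y^\ast$, $\|\ell\|\le 1$, using the Hahn--Banach fact $\|v\|_Y=\sup_{\|\ell\|\le 1}|\ell(v)|$, before taking the supremum over directions $\zeta$. This slicing-plus-functionals reduction is precisely the standard proof of the vector-valued Schwarz lemma that the paper treats as a black box, so your argument is self-contained and more elementary, at the cost of being longer; the paper's proof is shorter but delegates the genuinely nontrivial point to a citation. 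Both proofs handle the remaining structural hypotheses (finiteness of $D^\ast f$, boundedness on balls) in the same way.
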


\begin{proof}
Let $z\in D$.  For   $r<d(z, \partial D)$ the mapping  $g(\zeta ) = \frac 1M f(z + r\zeta)$,  where  $M = \sup_{w\in B(z, r)} \|f(w)\|_Y$, is analytic on $B_X(0,1)$ and bounded  by $1$. By the
Schwarz lemma there holds   $\|D g (0)\|\le 1$. Since  $D g(0) = \frac rM Df(z)$, we have
\begin{equation*}
\|Df (z) \| \le \frac {M}r = \frac 1 r \sup_{w\in B_X(z,r)} \|f(w)\|_Y.
\end{equation*}  If we replace above  $f$ with $f-C$, where  $C = f(z)$,  we obtain
\begin{equation*}
\|Df (z) \| \le    \frac 1 r \sup_{w\in B_X(z,r)} \|f(w) - f(z)\|_Y,\quad z\in D,\quad 0<r<d(z,\partial D),
\end{equation*}
which proves  that  $f$ belongs to  the class  $\mathrm{RO}^{1}_d (D,Y)$.
\end{proof}

\begin{remark}
Based on  \cite{PAVLOVIC.PA} and  \cite{PAVLOVIC.BOOK}  we give below some examples of classes of regularly oscillating functions on a domain in  $\mathbb{R}^n$  with respect to the   distance
function  of the domain  as  the weight  function.

The class $\mathrm{OC}^2(D)$, where  $D\subseteq \mathbb{R}^n$ is a domain,  contains  functions  $f\in C^2(D) $ which satisfy the inequality
\begin{equation*}
|\Delta f (z)|\le C_f \sup_{w\in B(z,r)}|\nabla  f(w)|,\quad z\in D,\quad 0<r<d(z,\partial D),
\end{equation*}
where $C_f$ is a constant. Every function in this class is regularly  oscillating  with respect to the distance function of the domain $D$. The  class $\mathrm {OC}^2 (D)$ contains many others
well known  classes  of  function, such as  harmonic, hyperbolically harmonic, and polyharmonic class of functions on the domain $D$.

On the other hand,         for a domain $D\subseteq \mathbb {R}^n$ one may relax the  definition of regularly oscillating  mappings in order to include more classes.   It may be asked that the
condition  \eqref{EQ.RO}  holds for a locally  Lipschitz mapping on $D$  and for almost every  $z\in D$. Then the class of regularly oscillating  mappings with respect to the distance function
on $D$  include, for example,    the  class of convex functions.
\end{remark}

We introduce now the mapping classes we need in this work.

For a non-negative function  $\varphi$ on $[0,\infty)$, which will be called the majorant in this context, we introduce the mapping class   $\Lambda_\varphi(X,Y)$  (a Lipschitz type class)  in
the following way: A mapping  $f:X\to Y$ belongs  to this class if  there  exists a constant  $C_f$ such that
\begin{equation*}
d_Y (f(x),f(y)) \le C_f \varphi (d_X (x,y)),\quad  x,y\in X.
\end{equation*}
The infimum of all $C_f$ is called the norm of $f$ in the class $\Lambda_\varphi (X,Y)$, and   denoted by $\|f\|_{\Lambda_\varphi (X,Y)}$. In the case $\varphi =  \varphi _ \alpha$, $\alpha\in
(0,1]$,   we write   $\Lambda ^\alpha (X,Y)$ instead of $\Lambda_{\varphi _\alpha}(X,Y)$.

For a weight $w$ on the metric space $(X,d_X)$ the class $\mathrm{B}_{w}(X,Y)$ (a Bloch-type class)  is consisted of mappings  $f\in D^\ast(X,Y)$  for which there exists a constant  $C_f$ such
that
\begin{equation*}
D ^\ast f(z) \le C_f w(z),\quad  z\in X.
\end{equation*}
The least  possible constant  $C_f$  is defined to be the  norm of  $f$ in this class, and denoted by  $\|f\|_{\mathrm {B}_{w} (X,Y)}$. 

Notice that  a mapping  $f:X\to  Y$ belongs to $\mathrm {B}_1 (X,Y)$  if and  only if  $D^\ast f$ is bounded on $X$.

Finally,      for the sake of completeness we include here some facts concerning the weighted distance on a metric space. We refer to \cite{BJORNS.BOOK} for the detailed exposition;   see also
\cite{MARKOVIC.JGA} for  a  short one.

A continuous mapping  $\gamma:[a,b]\to X$, where $[a,b]\subseteq \mathbb{R}$ is a segment, is called the curve in a metric space $(X,d_X)$ (we  identify the curve $\gamma$       with its image
$\gamma([a,b])\subseteq X$).  We say that the curve $\gamma$ connects $\gamma (a)$ and $\gamma (b)$, or that  $\gamma(a)$ and $\gamma(b)$ are endpoints of  the curve $\gamma$. The    length of
the   curve  $\gamma$ is
\begin{equation*}
\ell  (\gamma)   = \sup   \sum_{i=1}^n d_X  (\gamma(t_{i-1}),\gamma (t_i)),
\end{equation*}
where the supremum is taken  over all partition $a = t_0 < t_1 < \dots < t_n = b$, $n\in\mathbb{N}$, of $[a,b]$. A  curve  is rectifiable  if its  length  is finite.  The family of rectifiable
curves with endpoints at $x$ and $y$ is denoted by $\Gamma ({x,y})$.          A metric space  $X$ is rectifiable connected if  $\Gamma (x,y) $ is not empty for every $x,y\in X$. Obviously, for
$\gamma\in\Gamma _{x,y}$   we  have $d_X (x,y)\le\ell (\gamma)$.  If $\gamma:[a,b]\to X$ is a rectifiable curve in a metric space $X$,    and    $[c,d] \subseteq [a,b]$, then the (rectifiable)
curve  $\gamma:[c,d]\to X$  is denoted  by  $\gamma_{[c,d]}$. Clearly, we have $\ell (\gamma) = \ell  (\gamma_{[a,c]}) + \ell(\gamma_{[c,b]})$, $c\in [a,b]$.

The  integral of a continuous function    $f$ on $X$  over a  rectifiable  curve $\gamma\subseteq X$  is denoted by  $\int_\gamma  f$.  It is the  number which satisfies:             For every
$\varepsilon>0$ there  exists $\delta>0$ such that
\begin{equation*}
\left|\int_\gamma  f   -  \sum _{i=1}^n f (\gamma (s_i)) \ell(\gamma|_{[t_{i-1},t_i]} ) \right|<\varepsilon
\end{equation*}
for every partition of $t_0,t_1,\dots,t_n$ of $[a,b]$ with $\max_{1\le i\le n}|t_i - t_{i-1}|<\delta$, and every sequence $s_1,s_2,\dots,s_n$  such that $t_{i-1}\le s_i\le t_i$, $1\le i \le n$.

Let $w$ be a weight on a metric space $X$.  The weighted  distance on  the  (rectifiable connected)  metric  space $X$ is introduced  by
\begin{equation*}
d_w   (x, y)  =  \inf_{\gamma \in \Gamma_{x,y}} \int_\gamma w, \quad x,y\in X.
\end{equation*}
In particulary  for $w\equiv 1$ we have $\int_\gamma 1 = \ell(\gamma)$ and  $d_1 (x,y) =  \inf_{\gamma \in \Gamma_{x,y}}  \ell(\gamma)$, $x,y\in X$. The distance $d_{1}$ is in some cases equal
to  $d_X$. For example, this occurs if  $X$ is a convex domain in  a normed space. However, in general, we have  $d_X \le d_1$.    In case of domains in $\mathbb{R}^n$,  $d_1$ is known  as the
inner distance.        In case when the weight function is  reciprocal  to the  distance function of a domain,  for the weighted  distance we have the known   quasi-hyperbolic distance  on the
domain   introduced  by Gehring and Palka.

\begin{remark}
The topology  of $(X, d_X)$ may be the  same as the topology of the metric space  $(X, d_w)$.                             Let   $(X, d_X)$   be a metric space with the following local property
\begin{equation}\label{EQ.COND.LOCAL}
(\exists M>0)(\forall x\in X) (  \exists r>0) ( \forall  y\in B_X(x,r) ) ( \exists \gamma \in  \Gamma_{x,y}) (  \ell (\gamma_{x,y} ) \le M d(x,y)).
\end{equation}
For a    weight $w$  on $X$ we  shall   prove
\begin{equation}\label{EQ.LIMINFSUP.W}
 w (x)\le \liminf _{y\to  x} \frac { d_w(x,y)} {d_X(x,y)} \le \limsup  _{y\to  x} \frac { d_w(x,y)} {d_X(x,y)} \le  M  w (x), \quad x\in X,
\end{equation}
which is  sufficient  for the  equality of the two  topologies.

Indeed,  since   $w$  is continuous, for   $\varepsilon> 0$  there exists an open ball $B_X(x,r)$ such that
\begin{equation*}
0<w(x)-\varepsilon<w (y)<w(x)+\varepsilon,\quad y\in B_X(x,r).
\end{equation*}
We may replace $r$ with a smaller number such that \eqref{EQ.COND.LOCAL} is satisfied.  Now,  we have
\begin{equation*}
d_w (x,y)  \le \int_{\gamma _{xy}}  \omega  \le   (w (x) +\varepsilon )\ell (\gamma_{xy}) \le     (w (x) +\varepsilon ) M d_X(x,y).
\end{equation*}
On the other hand, if $\gamma$ is among curves that connect   $x$  and $y$, then
\begin{equation*}
d_{w} (x,y) = \inf_\gamma \int_\gamma w\ge (w (x) - \varepsilon) d_X(x,y).
\end{equation*}
Therefore,
\begin{equation*}
w(x) - \varepsilon  \le \frac{d_w (x,y)}{d_X (x,y)} \le M(w(x) + \varepsilon),\quad    y\in B_X(x,r),
\end{equation*}
which implies
\begin{equation}\label{EQ.LIM.W}
w(x) - \varepsilon \le \liminf _{y\to  x} \frac { d_w(x,y)} {d_X(x,y)} \le \limsup  _{y\to  x} \frac { d_w(x,y)} {d_X(x,y)}\le M ( w (x)  + \varepsilon ), \quad x\in X.
\end{equation}
Letting  $\varepsilon\to 0$, we obtain \eqref{EQ.LIMINFSUP.W}.

The property \eqref{EQ.LIMINFSUP.W} occurs on domains in  normed spaces, since  one may chose  $\gamma_{x,y} (t)=(1-t) x + ty$, $t\in [0,1]$. It is easy to show that   $\gamma_{x,y}$      is a
rectifiable curve, and   $\ell (\gamma _{x,y}) = \|x-y\|$.

For similar results on the coincidence   of topologies we  refer to  \cite{ZHU.JLMS}.
\end{remark}

\section{Main results}

This section contains our main results.       At the end of the paper, based on our approach, we prove a version of the Hardy and Littlewood theorem for analytic mappings between normed spaces.

The following theorem generalize the one part of the classical Hardy and Littlewood theorem. It contains an integral condition which connects the weight  on  a  metric space  with the ordinary
metric through the majorant. This condition is a  generalization of \eqref{EQ.LAPPALAINEN.COND} which concerns  domains in $\mathbb{R}^n$,  and it simply means that the  weighted  distance may
be controlled by the ordinary one.

\begin{theorem}\label{TH.1}
Let $(X,d_X)$ and $(Y,d_Y)$  be  metric spaces,  $w$   a weight  on  $X$, and  $\varphi$ a  non-negative function  on $[0,\infty)$.                               Assume the following condition:
\begin{equation}\label{EQ.COND.LAPP}
(\exists M>0)(\forall x,y\in X )( \exists \gamma \in \Gamma(x,y)) \int _\gamma w \le M  \varphi  (d_X (x,y)).
\end{equation}
Then we have
\begin{equation*}
\|f\| _{ \Lambda_\varphi  (X,Y)}\le   M\|f\| _{ \mathrm {B}_w (X,Y)}, \quad f\in \mathrm {B}_w (X,Y).
\end{equation*}
In particular,   if $\varphi$ is continuous in $0$ and $\varphi(0)=0$, then every  $ f\in \mathrm {B}_w (X,Y)$ has a continuous extension on the  minimal completion of the metric space $X$, if
$Y$ is a complete metric space.
\end{theorem}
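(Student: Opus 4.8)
The plan is to reduce the whole statement to a one-dimensional comparison inequality along curves. Set $C = \|f\|_{\mathrm{B}_w(X,Y)}$, so that $D^\ast f(z) \le C\,w(z)$ for every $z\in X$. Fix $x,y\in X$ and let $\gamma\in\Gamma(x,y)$ be a curve furnished by the hypothesis \eqref{EQ.COND.LAPP}. The heart of the matter is the estimate
\begin{equation*}
d_Y(f(x),f(y)) \le C\int_\gamma w,
\end{equation*}
which, once established, combines with \eqref{EQ.COND.LAPP} to give $d_Y(f(x),f(y))\le CM\varphi(d_X(x,y))$, and hence $\|f\|_{\Lambda_\varphi(X,Y)}\le M\|f\|_{\mathrm{B}_w(X,Y)}$ by the definition of the $\Lambda_\varphi$-norm.

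First I would reparametrize $\gamma$ by arc length. Writing $L(t)=\ell(\gamma_{[a,t]})$ for the continuous non-decreasing length function and $\ell=L(b)$, there is a $1$-Lipschitz curve $\widetilde{\gamma}:[0,\ell]\to X$ with $\gamma=\widetilde{\gamma}\circ L$ and $\int_\gamma w = \int_0^\ell w(\widetilde{\gamma}(s))\,ds$; this is well defined because $L(t_1)=L(t_2)$ forces $\gamma$ to be constant on $[t_1,t_2]$. Since $\widetilde{\gamma}$ is $1$-Lipschitz we have $d_X(\widetilde{\gamma}(s),\widetilde{\gamma}(t))\le|s-t|$. Put $G(s)=d_Y(f(x),f(\widetilde{\gamma}(s)))$. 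As finiteness of $D^\ast f$ makes $f$ pointwise Lipschitz, hence continuous at every non-isolated point, $G$ is continuous with $G(0)=0$ and $G(\ell)=d_Y(f(x),f(y))$. Using $|G(s)-G(t)|\le d_Y(f(\widetilde{\gamma}(s)),f(\widetilde{\gamma}(t)))$ and splitting off the factor $d_X(\widetilde{\gamma}(s),\widetilde{\gamma}(t))/|s-t|\le 1$ (the quotient being $0$ when $\widetilde{\gamma}(s)=\widetilde{\gamma}(t)$), the definition of $D^\ast f$ yields the Dini estimate
\begin{equation*}
\limsup_{s\to t}\frac{|G(s)-G(t)|}{|s-t|}\le D^\ast f(\widetilde{\gamma}(t))\le C\,w(\widetilde{\gamma}(t))=:\psi(t),
\end{equation*}
where $\psi$ is continuous.

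The key step is then a real-variable comparison lemma: if $G$ is continuous on $[0,\ell]$, $\psi$ is continuous and non-negative, and the upper right Dini derivative of $G$ is bounded above by $\psi$, then $G(\ell)-G(0)\le\int_0^\ell\psi$. I would prove this by the standard connectedness argument: for $\varepsilon>0$ set $u(s)=G(s)-G(0)-\int_0^s\psi-\varepsilon s$, observe that the upper right derivative of $u$ is $\le-\varepsilon<0$ everywhere, and conclude that the continuous function $u$ with $u(0)=0$ cannot become positive. Indeed, if $s^\ast=\sup\{s:u\le 0\text{ on }[0,s]\}$ were $<\ell$, then continuity would give $u(s^\ast)=0$, while the strictly negative upper derivative forces $u<0$ immediately to the right of $s^\ast$, contradicting the definition of $s^\ast$. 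Letting $\varepsilon\to 0$ gives $G(\ell)\le\int_0^\ell\psi=C\int_\gamma w$, which is exactly the desired estimate. The ``in particular'' clause then follows at once: the bound $d_Y(f(x),f(y))\le C_f\,\varphi(d_X(x,y))$ together with $\varphi(0)=0$ and continuity of $\varphi$ at $0$ shows $f$ to be uniformly continuous (given $\eta>0$, choose $\delta$ with $\varphi(s)<\eta/C_f$ for $s<\delta$), and a uniformly continuous map into a complete metric space extends uniquely and continuously to the completion of $X$.

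I expect the main obstacle to be the passage from the pointwise $\limsup$ (derivative-type) bound to the integral bound, that is, the comparison lemma together with the care required in the arc-length reparametrization when $\gamma$ has stationary subarcs; the remaining manipulations are routine bookkeeping with the triangle inequality and the definition of the two norms.
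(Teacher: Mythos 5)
Your proof is correct, and it takes a genuinely different route from the paper's. The paper obtains the key estimate $d_Y(f(x),f(y))\le\|f\|_{\mathrm{B}_w(X,Y)}\int_\gamma w$ by partitioning the parameter interval into pieces on which $w$ is nearly constant and applying to each piece the image-curve lemma (Lemma \ref{LE.IMAGE}, quoted from \cite{MARKOVIC.JGA}), which bounds $\ell(f\circ\gamma)$ by $\|f\|_{\mathrm{B}_1}\,\ell(\gamma)$; the argument is packaged as the Lipschitz estimate $d_1(f(x),f(y))\le\|f\|_{\mathrm{B}_w(X,Y)}\,d_w(x,y)$ between the weighted distance on $X$ and the inner distance on $Y$, after which $d_Y\le d_1$ together with \eqref{EQ.COND.LAPP} concludes. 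You instead reparametrize by arc length, pass to the scalar function $G(s)=d_Y(f(x),f(\widetilde{\gamma}(s)))$, bound its upper right Dini derivative by $\|f\|_{\mathrm{B}_w(X,Y)}\,w(\widetilde{\gamma}(s))$, and integrate this bound via the classical comparison lemma for Dini derivatives; your connectedness argument with the $\varepsilon$-slack is the standard correct one, and the delicate points you flag are handled properly (the reparametrization is well defined across stationary subarcs; when $\widetilde{\gamma}(s)=\widetilde{\gamma}(t)$ the increment of $G$ vanishes, so the $0/0$ case is harmless; a nonconstant arc-length curve never sits at an isolated point, so $D^\ast f$ genuinely controls the quotient). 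What the paper's route buys is the stronger intermediate conclusion that the image curve $f\circ\gamma$ is rectifiable with controlled length, i.e. control in the inner metric of $Y$, which has independent interest; what yours buys is a self-contained, elementary argument that avoids both Lemma \ref{LE.IMAGE} and the auxiliary distances $d_w$, $d_1$. Both yield the same constant $M\|f\|_{\mathrm{B}_w(X,Y)}$, and your treatment of the extension claim via uniform continuity is the standard argument the paper leaves implicit.
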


In the proof we use the following lemma  which gives the estimate of the length of the image of a rectifiable curve. It  may  be found  with a  proof in \cite{MARKOVIC.JGA}.

\begin{lemma}[see \cite{MARKOVIC.JGA}]\label{LE.IMAGE}
Let $X$ and $Y$  be metric spaces,  $\gamma\subseteq X$  a rectifiable curve, and $f \in \mathrm{B}_1 (X, Y)$. Then the curve  $f \circ \gamma \subseteq Y$  is also  rectifiable,  and  we have
\begin{equation*}
\ell (f \circ \gamma ) \le \|f\|_{ \mathrm {B}_1 (X,Y)} \ell(\gamma).
\end{equation*}
\end{lemma}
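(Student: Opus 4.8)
The plan is to reduce the length inequality to a single ``chord estimate'' along the curve and then sum it over partitions. Write $\gamma:[a,b]\to X$ and put $L=\|f\|_{\mathrm{B}_1(X,Y)}=\sup_{x\in X}D^\ast f(x)$. The heart of the matter is the claim that
\[
d_Y(f(\gamma(s)),f(\gamma(t)))\le L\,\ell(\gamma_{[s,t]}),\qquad a\le s\le t\le b.
\]
Granting this, for an arbitrary partition $a=t_0<t_1<\dots<t_n=b$ I would sum the claim over the subintervals and invoke the additivity $\ell(\gamma)=\sum_{i=1}^n\ell(\gamma_{[t_{i-1},t_i]})$ recorded in the preliminaries, obtaining $\sum_{i=1}^n d_Y(f(\gamma(t_{i-1})),f(\gamma(t_i)))\le L\,\ell(\gamma)$. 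Taking the supremum over all partitions yields $\ell(f\circ\gamma)\le L\,\ell(\gamma)$, which is finite; hence $f\circ\gamma$ is rectifiable and the stated bound holds.

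Before proving the claim I would record two preliminary facts. First, $D^\ast f(x)\le L<\infty$ forces $f$ to be continuous at every $x$, since for $y\to x$ one has $d_Y(f(x),f(y))\le (L+1)\,d_X(x,y)\to 0$; consequently $f\circ\gamma$ is genuinely a curve. Second, the arc-length function $\sigma(\tau)=\ell(\gamma_{[a,\tau]})$ is non-decreasing and continuous on $[a,b]$, this being the standard continuity of arc length for a rectifiable curve.

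The claim itself I would prove by a connectedness (continuity-method) argument, which is where the only real subtlety lies. Fix $s<t$ and $\varepsilon>0$, and set
\[
A=\{\tau\in[s,t]:d_Y(f(\gamma(s)),f(\gamma(\tau)))\le (L+\varepsilon)(\sigma(\tau)-\sigma(s))\}.
\]
Then $s\in A$, and $A$ is closed because $\tau\mapsto d_Y(f(\gamma(s)),f(\gamma(\tau)))$ and $\sigma$ are continuous. The key propagation step uses the local hypothesis: if $\tau_0\in A$ with $\tau_0<t$, then $D^\ast f(\gamma(\tau_0))\le L$ supplies $\delta>0$ with $d_Y(f(\gamma(\tau_0)),f(y))\le (L+\varepsilon)d_X(\gamma(\tau_0),y)$ whenever $d_X(\gamma(\tau_0),y)<\delta$; continuity of $\gamma$ gives $\eta>0$ so that $d_X(\gamma(\tau_0),\gamma(\tau))<\delta$ for $\tau\in[\tau_0,\tau_0+\eta]$, and then the triangle inequality together with $d_X(\gamma(\tau_0),\gamma(\tau))\le\ell(\gamma_{[\tau_0,\tau]})=\sigma(\tau)-\sigma(\tau_0)$ shows that every such $\tau$ lies in $A$. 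Hence $\sup A$ cannot be smaller than $t$, so $t\in A$; letting $\varepsilon\to 0$ gives the claim. The main obstacle is precisely this local-to-global passage: because $D^\ast f$ controls oscillation only relative to a base point, the estimate must be anchored at the left endpoint $\gamma(\tau_0)$ and propagated rather than applied with a uniform modulus, and it is the continuity of the arc-length function $\sigma$ that keeps the propagating set $A$ closed.
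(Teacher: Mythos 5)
Your proposal is correct, but there is nothing in the paper to compare it against line by line: the paper states Lemma~\ref{LE.IMAGE} without proof, importing it from \cite{MARKOVIC.JGA}, so your argument should be judged as a self-contained substitute --- and as such it works. The reduction to the chord estimate $d_Y(f(\gamma(s)),f(\gamma(t)))\le L\,\ell(\gamma_{[s,t]})$, followed by summation over a partition and the additivity of length, is exactly the right skeleton, and your connectedness argument correctly handles the one genuinely delicate point: the definition of $D^\ast f$ gives, for each base point $x$ and each $\varepsilon>0$, the bound $d_Y(f(x),f(y))\le (L+\varepsilon)\,d_X(x,y)$ only on a ball whose radius $\delta$ depends on $x$, so a naive compactness or uniform-modulus argument would be gappy, whereas anchoring the estimate at $\gamma(\tau_0)$ and propagating the set $A$ along $[s,t]$ avoids this. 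I verified the propagation step: for $\tau\in[\tau_0,\tau_0+\eta]$ one gets $d_Y(f(\gamma(s)),f(\gamma(\tau)))\le (L+\varepsilon)(\sigma(\tau_0)-\sigma(s))+(L+\varepsilon)(\sigma(\tau)-\sigma(\tau_0))=(L+\varepsilon)(\sigma(\tau)-\sigma(s))$, using $d_X(\gamma(\tau_0),\gamma(\tau))\le\sigma(\tau)-\sigma(\tau_0)$, and the closed set $A$ must then exhaust $[s,t]$. Two small remarks. First, you can get by with less than full continuity of the arc-length function $\sigma$: since $\sup A$ is approached from the left, closedness of $A$ at $\sup A$ follows from monotonicity of $\sigma$ together with continuity of $f\circ\gamma$ alone (take $\tau_n\uparrow\sup A$ in $A$ and use $\sigma(\tau_n)\le\sigma(\sup A)$); the continuity of $\sigma$ is nonetheless a true standard fact, so invoking it is harmless. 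Second, at an isolated point of $X$ the paper's convention $D^\ast f=0$ makes the local estimate vacuous but still valid, since a small ball around $\gamma(\tau_0)$ then contains only $\gamma(\tau_0)$ itself; your argument survives this edge case unchanged.
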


\begin{proof}[Proof of Theorem \ref{TH.1}]
Let $f\in  \mathrm {B}_w (X,Y)$ satisfies
\begin{equation}\label{EQ.DSTAR}
D ^\ast  f(z) \le C_f w(z), \quad z\in X,
\end{equation}
where $C_f $ is a constant. We shall show  that   $f$ belongs to the class   $\Lambda_\varphi (X,Y)$,  and
\begin{equation*}
d_Y(f(x), f(y))\le  {C}' _f \varphi (d_X(x,y)),\quad x, y\in X,
\end{equation*}
where  $ {C}'_f = M C_f $. This clearly    implies the  statement in our theorem.

Since of \eqref{EQ.DSTAR}  the quantity
\begin{equation*}
B_f  = \|f\|_{\mathrm {B}_w(X,Y)} = \sup_{z\in X} {w(z)} ^{-1}  D^\ast  f(z)
\end{equation*}
is finite.      Note that $D^\ast f$ is bounded on every compact subset of $X$, which also follows from \eqref{EQ.DSTAR} (since $w$ is continuous). Therefore, $f\in \mathrm{B}_1(K,Y)$,   where
$K\subseteq X$ is a  compact set.

Let $\gamma:[a,b]\to X$ be a rectifiable curve such that $\gamma(a) = x$ and $\gamma(b) = y $ (it follows from \eqref{EQ.COND.LAPP} that the set $\Gamma (x,y)$ is not empty). The  image  curve
$f \circ \gamma$ connects  $f(x)$  with  $f(y)$, and   by Lemma \ref{LE.IMAGE} this curve is  also  a rectifiable one.       Therefore,  $\ell (f \circ \gamma) =\int _ {f \circ \gamma} {1}$ is
finite (which means that the set  $\Gamma (f(x),f(y))$ is also non-empty). We now  proceed to estimate the  last  term.

Our first aim is to prove that the mapping    $f$   is Lipschitz with respect to the weighted distance on the first metric space,     and the inner distance on the second metric space,    i.e.,
\begin{equation}\label{EQ.LIP.B}
d_1 (f(x),f(y)) \le B_f  {d_w( x,y)},\quad  x, y\in X.
\end{equation}

Let $\varepsilon> 0$ be arbitrary.         Since $w(z)$  is continuous,              and since a  curve is a compact set, we may found a partition $a= t_0<t_1<\dots < t_n = b$ of $[a,b]$  with
$\max_{1\le i\le n} |t_{i-1} -t_i| <\varepsilon$,  such that
\begin{equation*}
|w(\gamma (t)) -  w(\gamma (t_{i-1} ) )|<B^{-1}_f\varepsilon , \quad t \in [t_{i-1},t_{i}],\quad 1\le  i\le  n.
\end{equation*}
Then we have
\begin{equation*}
w(\gamma (t)) < w(\gamma (t_{i-1})) +  B^{-1}_f\varepsilon,
\end{equation*}
so it follows
\begin{equation*}
D^\ast  f (\gamma (t))\le  B _f  w (\gamma (t)) < B_f  w (\gamma (t_{i-1}))+\varepsilon.
\end{equation*}
By Lemma \ref{LE.IMAGE} we have
\begin{equation*}
\ell(f\circ \gamma|_{[t_{i-1},t_i]})\le (B_f w(\gamma (t_{i-1}))+\varepsilon)  \ell(\gamma|_{[t_{i-1},t_i]}),
\end{equation*}
therefore  for the corresponding   integral sum  of   $\int_{f\circ \gamma } 1$  we derive the  following estimate
\begin{equation*}\begin{split}
\sum _ {i=1}^n  1\cdot  \ell(f\circ \gamma|_{[t_{i-1},t_i]})
&\le\sum_{i=1}^n (B_f w(\gamma (t_{i-1}))+\varepsilon)  \ell(\gamma|_{[t_{i-1},t_i]})
\\& \le \sum _ {i=1}^n   B_f w(\gamma (t_{i-1})) \ell(\gamma|_{[t_{i-1},t_i]})
+ \varepsilon \sum _ {i=1}^n   \ell (\gamma|_{[t_{i-1},t_i]})
\\& \le \sum _ {i=1}^n   B_f w(\gamma( t_{i-1} )) \ell(\gamma|_{[t_{i-1},t_i]})
+ \varepsilon   \sum _ {i=1}^n    \ell(\gamma|_{[t_{i-1},t_i]})
\\& \le B_f  \sum _ {i=1}^n   {w(\gamma( t_{i-1} ))} \ell(\gamma|_{[t_{i-1},t_i]})  + \varepsilon  \ell(\gamma).
\end{split}\end{equation*}
Letting  $\varepsilon\to 0$  above, we obtain
\begin{equation*}
\int_{f\circ \gamma} 1 \le B_f   \int_\gamma w\Longrightarrow \inf_{\gamma\in \Gamma(x, y)} \int_{f\circ \gamma} 1 \le  B_f   d_ w (x,y),
\end{equation*}
so it follows
\begin{equation*}\begin{split}
d_{1} (f(x),f(y))& = \inf _{\delta\in \Gamma(f(x),f(y))} \int _ {\delta} { 1 }
\\& \le\inf_{\gamma\in \Gamma (x, y)}  \int_{f\circ \gamma}1\\&\le B_f  d_w (x, y),
\end{split}\end{equation*}
which proves  \eqref{EQ.LIP.B}.

We shall now  prove that $f$ belongs to the class  $\Lambda_\varphi (X,Y)$.        The  condition  \eqref{EQ.COND.LAPP} implies   $d_{w} \le M (  \varphi \circ d_X)$. Since $d_Y \le d_1$   and
$B_f\le C_f$, for every   $x,y\in X$ we have
\begin{equation*}\begin{split}
d_Y (f(x), f(y)) & \le d_1(f(x),f(y)) \\&\le  C_f d_{w} (x,y) \\& \le  M C_f   \varphi ( d_X (x,y) ),
\end{split}\end{equation*}
which is the inequality we aimed to prove.
\end{proof}

The next two theorems are converse of the preceding one but for regularly oscillating mappings with respect to the weight on the first metric space and some  conditions on the majorant. In the
first one theorem we have very strong assumptions on the majorant  in order that we  obtain  the control of  the B-norm.    In the  second one we  relax    the conditions,  but we have certain
assumption  on  the  first  metric space along  with the  weight.  However, in  the last case we are not able to obtain   any control of the  norm of a mapping in $\mathrm {B}$-space  via  the
norm of  the same  mapping   in $\Lambda$-space.      In the proof of the last mentioned result we use an idea of Stegbuchner  \cite{STEGBUCHNER.AASF}.

\begin{theorem}\label{TH.2.1}
Let  $(X,d_X)$  and  $(Y,d_Y)$ be  metric spaces, and let  a function $\varphi \in C^1 (0,\infty) $ be nonnegative on $[0,\infty)$ with the following property:  There exists  a constant  $A>0$
such that
\begin{equation}\label{EQ.COND.A}
\frac  {\varphi (t)} {t} <A \varphi'(t), \quad t\in (0,\infty).
\end{equation}
Then      we have
\begin{equation*}
\|f\| _{\mathrm {B} _{\varphi'\circ w} (X,Y)}\le A K \|f\|_{\Lambda_\varphi(X,Y)}, \quad f\in \Lambda_\varphi(X,Y)\cap \mathrm {RO}^K_w(X,Y)
\end{equation*}
for every $K>0$ and   every weight  $w$ on the metric space $X$.
\end{theorem}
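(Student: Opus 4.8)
The plan is to unwind the definitions and show directly that the regularly oscillating hypothesis, combined with the $\Lambda_\varphi$ bound, forces the pointwise estimate $D^\ast f(x)\le AK\,\|f\|_{\Lambda_\varphi(X,Y)}\,\varphi'(w(x))$ at every point $x\in X$. Fix $x\in X$ and write $C_f=\|f\|_{\Lambda_\varphi(X,Y)}$ and $K=K_f$. For $0<r<w(x)$ the definition of $\mathrm{RO}^K_w$ gives
\begin{equation*}
D^\ast f(x)\le \frac{K}{r}\sup_{y\in B_X(x,r)} d_Y(f(x),f(y)),
\end{equation*}
and since $f\in\Lambda_\varphi(X,Y)$ we have $d_Y(f(x),f(y))\le C_f\,\varphi(d_X(x,y))\le C_f\,\varphi(r)$ for $y\in B_X(x,r)$, using that $\varphi$ is increasing on the relevant range (which I must justify from the sign condition \eqref{EQ.COND.A}, see below). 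Combining these two inequalities yields $D^\ast f(x)\le KC_f\,\varphi(r)/r$ for every $r\in(0,w(x))$.

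The next step is to pass to the limit $r\to w(x)^-$. By continuity of $\varphi$ this gives $D^\ast f(x)\le KC_f\,\varphi(w(x))/w(x)$, and now I invoke the structural hypothesis \eqref{EQ.COND.A} with $t=w(x)$, namely $\varphi(t)/t< A\varphi'(t)$, to conclude
\begin{equation*}
D^\ast f(x)\le KC_f\,\frac{\varphi(w(x))}{w(x)}\le AKC_f\,\varphi'(w(x))=AK\,\|f\|_{\Lambda_\varphi(X,Y)}\,(\varphi'\circ w)(x).
\end{equation*}
Since $x$ was arbitrary, this is exactly the assertion that $f\in\mathrm{B}_{\varphi'\circ w}(X,Y)$ with $\|f\|_{\mathrm{B}_{\varphi'\circ w}(X,Y)}\le AK\|f\|_{\Lambda_\varphi(X,Y)}$, because the $\mathrm{B}$-norm is by definition the least constant $C$ with $D^\ast f(z)\le C\,(\varphi'\circ w)(z)$.

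I expect the main obstacle to be a small but genuine bookkeeping issue about monotonicity and positivity of $\varphi$ and $\varphi'$, needed to make the two inequalities above legitimate. The hypothesis only states $\varphi\in C^1(0,\infty)$, $\varphi\ge 0$ on $[0,\infty)$, and the differential inequality \eqref{EQ.COND.A}. To use $\varphi(d_X(x,y))\le\varphi(r)$ I need $\varphi$ nondecreasing, and to divide by $\varphi'(t)$ and preserve the inequality direction in \eqref{EQ.COND.A} I need $\varphi'(t)>0$; indeed, since $\varphi(t)/t\ge 0$ and \eqref{EQ.COND.A} gives $A\varphi'(t)>\varphi(t)/t\ge 0$, we automatically get $\varphi'(t)>0$ on $(0,\infty)$, whence $\varphi$ is strictly increasing there — so no extra assumption is required. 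The only remaining care is the degenerate case where $\varphi(r)=0$ for small $r$ or where $w(x)$ is an isolated point value; for an isolated $x$ we have $D^\ast f(x)=0$ by convention and the estimate holds trivially, and the boundary passage $r\to w(x)^-$ is justified purely by continuity of $\varphi$ at the single point $w(x)\in(0,\infty)$. With these observations the argument is essentially a one-line chain of inequalities per point, and I would present it in that streamlined form.
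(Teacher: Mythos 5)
Your proof is correct and is essentially the paper's argument run forwards instead of by contradiction: you combine the same two bounds (the $\mathrm{RO}^K_w$ inequality and the $\Lambda_\varphi$ estimate $\sup_{y\in B_X(x,r)}d_Y(f(x),f(y))\le C_f\varphi(r)$), compare them as $r\to w(x)^-$, and invoke \eqref{EQ.COND.A} at $t=w(x)$, which is exactly the paper's limiting step phrased directly. Your observation that \eqref{EQ.COND.A} together with $\varphi\ge 0$ forces $\varphi'>0$, hence $\varphi$ strictly increasing on $(0,\infty)$, correctly fills in the monotonicity the paper uses without comment.
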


\begin{proof}
Assume   that   $f\in \mathrm {RO}^K_w(X,Y)$ belongs   to  $\Lambda_\varphi  (X,Y)$, and let us  denote $ C_f=\|f\|_{\Lambda_\varphi  (X,Y)}$.  Then  we have
\begin{equation*}
d_Y (f(x),f(y)) \le C_f \varphi (d_X (x,y)),\quad  x,y\in X.
\end{equation*}
We shall show that    $f\in B _ {\varphi'\circ w} (X,Y)$ and
\begin{equation*}
D^\ast f (z) \le   AKC_f\varphi ' (w(z)),\quad  z\in X,
\end{equation*}
from which follows  the conclusion of  our theorem.

Denote  $M = AKC_f$,  and    assume  contrary, i.e., that there exists  $z\in X$ such that
\begin{equation}\label{EQ.M.CONTRA}
D^\ast f (z) >  M   \varphi ' (w(z)).
\end{equation}
Since   $f\in\mathrm { RO}^K_w(X,Y)$,   we have
\begin{equation*}
D^\ast f (z) \le \frac{K}r \sup_{x\in B(z,r)} d_Y ( f(z), f(x)), \quad  0< r < w(z),
\end{equation*}
which implies
\begin{equation*}
\sup_{x\in B(z,r)} d_Y ( f(z), f(x))\ge \frac r{K} D^\ast f (z)  >  \frac {rM}{K}   \varphi ' (w(z)).
\end{equation*}
On the other hand,  since $f\in \Lambda _\varphi (X,Y)$ and    since  $\varphi$   is increasing, we   obtain
\begin{equation*}\begin{split}
\sup_{x\in B(z,r)} d_Y ( f(z), f(x))\le C \sup_{x\in B(z,r)}  \varphi (d_X (z,x))  \le C \varphi (r), \quad  0< r< w(z).
\end{split}\end{equation*}
Therefore,   for every $r \in (0,w(z))$  the following  inequality holds
\begin{equation*}
\frac {rM}{K}   \varphi ' (w(z))<  \sup_{x\in B(z,r)} d_Y ( f(z), f(x))\le  C \varphi (r).
\end{equation*}
It follows that we must have
\begin{equation*}
A \varphi ' (w(z))<  \frac {\varphi (r)}r, \quad r \in (0,w(z)).
\end{equation*}
However, according to our assumption on $\varphi$, this is not possible. Indeed, by   the condition  \eqref{EQ.COND.A} we have
\begin{equation*}
 \frac {\varphi (w(z))}{w(z)} < A  \varphi ' (w(z)).
\end{equation*}
Now, since  of continuity of $\varphi'$,   we  obtain  that if $r<w(z)$ is sufficiently close to $w(z)$  the following   inequality holds
\begin{equation*}
\frac {\varphi (r)}{r} < A  \varphi ' (w(z)).
\end{equation*}
Since we reached  a contradiction, we conclude that \eqref{EQ.M.CONTRA} cannot  hold for any $z\in X$. This proves the inequality in our theorem.
\end{proof}

\begin{corollary}\label{CORO.HL.ALPHA}
Let $(X,d_X) $ and $(Y,d_Y)$ be metric spaces,  $\alpha \in (0,1)$, and  $w$  a  weight  on $X$. Assume  that
\begin{equation}\label{EQ.COND.ALPHA}
(\exists M>0)(\forall x,y\in X)( \exists \gamma \in \Gamma (x,y) ) \int _{\gamma}  w^{\alpha-1} \le M d_X (x,y) ^\alpha.
\end{equation}
Then we have
\begin{equation*}
\mathrm{RO}_w^K(X,Y) \cap  \Lambda^\alpha  (X,Y )  =  \mathrm{RO}_w^K(X,Y) \cap   \mathrm{ B}_{  w ^{\alpha-1}}(X,Y),
\end{equation*}
Moreover,   a   regularly oscillating  mapping  $f\in \mathrm {RO}_w^K(X,Y)$ satisfies
\begin{equation*}
D ^\ast f(z) \le C_f w(z)^{\alpha-1}, \quad z\in X,
\end{equation*}
if and only if it satisfies
\begin{equation*}
d_Y(f(x), f(y))\le  C'_f  d_X(x,y)^\alpha,\quad x, y\in X,
\end{equation*}
where $C_f$ and $C'_f$ are constants.                         If  the first inequality holds, then we can take $C'_f\le M C_f$. If the second inequality holds,    then the first one holds with
$C_f \le  \frac {2K}{\alpha}  C'_f$.
\end{corollary}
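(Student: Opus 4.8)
The plan is to recognize that this corollary is exactly the $\varphi=\varphi_\alpha$ specialization of Theorems \ref{TH.1} and \ref{TH.2.1}, once one makes the key substitution of using $w^{\alpha-1}$ (rather than $w$ itself) as the weight in the Bloch-type space. First I would check that $w^{\alpha-1}$ is a legitimate weight: since $w$ is positive and continuous and $t\mapsto t^{\alpha-1}$ is continuous on $(0,\infty)$, the function $w^{\alpha-1}$ is again positive and continuous. With this choice the hypothesis \eqref{EQ.COND.ALPHA} is literally the hypothesis \eqref{EQ.COND.LAPP} of Theorem \ref{TH.1} for the weight $w^{\alpha-1}$ and the majorant $\varphi_\alpha(t)=t^\alpha$, because $\varphi_\alpha(d_X(x,y))=d_X(x,y)^\alpha$. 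I would then prove the two implications separately and assemble the set identity.

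For the implication ``$D^\ast f\le C_f w^{\alpha-1}$ implies $d_Y(f(x),f(y))\le C_f' d_X(x,y)^\alpha$'', I would simply apply Theorem \ref{TH.1} with weight $w^{\alpha-1}$ and majorant $\varphi_\alpha$. Its conclusion reads $\|f\|_{\Lambda^\alpha(X,Y)}\le M\|f\|_{\mathrm B_{w^{\alpha-1}}(X,Y)}$, which gives exactly $C_f'\le MC_f$. I would stress that this half uses neither the regular-oscillation hypothesis nor $\alpha<1$; the intersection with $\mathrm{RO}_w^K(X,Y)$ appears only because the reverse implication needs it.

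For the reverse implication ``$d_Y(f(x),f(y))\le C_f' d_X(x,y)^\alpha$ together with $f\in\mathrm{RO}_w^K$ implies $D^\ast f\le C_f w^{\alpha-1}$'', I would invoke Theorem \ref{TH.2.1} with $\varphi=\varphi_\alpha$. Here $\varphi_\alpha(t)/t=t^{\alpha-1}$ and $\varphi_\alpha'(t)=\alpha t^{\alpha-1}$, so condition \eqref{EQ.COND.A} holds precisely for every $A>1/\alpha$; choosing the convenient value $A=2/\alpha$ yields $\|f\|_{\mathrm B_{\varphi_\alpha'\circ w}(X,Y)}\le\frac{2K}{\alpha}\|f\|_{\Lambda^\alpha(X,Y)}$. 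Since $\varphi_\alpha'(w(z))=\alpha w(z)^{\alpha-1}\le w(z)^{\alpha-1}$ (here $\alpha<1$ is used), this unwinds to $D^\ast f(z)\le\frac{2K}{\alpha}C_f' w(z)^{\alpha-1}$, that is $C_f\le\frac{2K}{\alpha}C_f'$. Alternatively, and more sharply, I would argue directly from the definitions: for $0<r<w(z)$ the regular-oscillation inequality \eqref{EQ.RO} combined with the bound $d_Y(f(z),f(x))\le C_f' d_X(z,x)^\alpha$ gives $D^\ast f(z)\le\frac{K}{r}\sup_{x\in B_X(z,r)}d_Y(f(z),f(x))\le\frac{K}{r}C_f' r^\alpha=KC_f' r^{\alpha-1}$, and letting $r\to w(z)^-$ (legitimate, since $r^{\alpha-1}$ is decreasing and the left side is independent of $r$) produces $D^\ast f(z)\le KC_f' w(z)^{\alpha-1}$, which a fortiori gives the stated constant because $K\le 2K/\alpha$.

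Finally I would assemble the set equality: the first implication gives $\mathrm{RO}_w^K\cap\mathrm B_{w^{\alpha-1}}\subseteq\mathrm{RO}_w^K\cap\Lambda^\alpha$ and the second gives the reverse inclusion, so the two intersections coincide. The only genuinely delicate points are bookkeeping ones, namely selecting $w^{\alpha-1}$ as the correct weight so that \eqref{EQ.COND.ALPHA} matches \eqref{EQ.COND.LAPP}, and tracking the factor $\alpha$ coming from $\varphi_\alpha'=\alpha\,w^{\alpha-1}$ when passing between the norms of $\mathrm B_{\varphi_\alpha'\circ w}$ and $\mathrm B_{w^{\alpha-1}}$. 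I expect no conceptual obstacle, since the substantive analytic work has already been carried out in Theorems \ref{TH.1} and \ref{TH.2.1}.
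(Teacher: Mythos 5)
Your proposal is correct and follows essentially the same route as the paper, whose entire proof is the one-line instruction to apply Theorems \ref{TH.1} and \ref{TH.2.1} with the majorant $\varphi_\alpha$ and the constant $A=\frac{2}{\alpha}$; the bookkeeping you spell out (taking $w^{\alpha-1}$ as the weight in Theorem \ref{TH.1} so that \eqref{EQ.COND.ALPHA} becomes \eqref{EQ.COND.LAPP}, and tracking the factor $\alpha$ in $\varphi_\alpha'(t)=\alpha t^{\alpha-1}$) is exactly what the paper leaves implicit. Your alternative direct argument for the reverse implication, namely $D^\ast f(z)\le \frac{K}{r}C_f'r^{\alpha}=KC_f'r^{\alpha-1}$ followed by $r\to w(z)^-$, is also valid and even yields the sharper constant $C_f\le KC_f'$, bypassing the contradiction argument of Theorem \ref{TH.2.1} altogether.
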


\begin{proof}
We have  just  to  apply Theorem \ref{TH.1} and Theorem \ref{TH.2.1}  for the majorant  $\varphi _{\alpha}$ and the constant  $A =\frac 2\alpha $.
\end{proof}

As a consequence of the above result we have the following statement for analytic mappings between normed spaces.  We note that the classical   theorem of Hardy and Littlewood remains with the
same constants estimate,  if we replace analytic mapping on the unit disk with analytic mapping on the unit  ball  in a  normed space  with values in another normed  space.

\begin{corollary}
Let $f$ be an  analytic mapping  on $B_X (0,1)$,  which is the unit ball in a normed  space $X$, with the  range  in  another normed space $Y$. Then we have
\begin{equation*}
\|Df (z)\| \le C_f (1-\|z\|_X)^{\alpha-1}, \quad z\in B_X(0,1),
\end{equation*}
if and only if $f$ satisfies the  H\"{o}lder condition
\begin{equation*}
\|f(z) - f(w)\|_Y\le C'_f\|z-w\|_X^{\alpha},\quad  z,w\in B_X (0,1),
\end{equation*}
where $C_f$ and $C'_f$ are constants.  Moreover, there holds the estimates  $C'_f\le\frac 4\alpha C_f$  and   $C_f\le \frac 2\alpha C'_f$.
\end{corollary}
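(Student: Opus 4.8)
The plan is to read the unit ball $B_X(0,1)$ as the base metric space in Corollary~\ref{CORO.HL.ALPHA}: equip it with the metric induced by $\|\cdot\|_X$, take the target $Y$, and take as weight $w(z)=1-\|z\|_X$. Since $\partial B_X(0,1)$ is the unit sphere one has $w(z)=d(z,\partial B_X(0,1))$: indeed $\|z-u\|_X\ge 1-\|z\|_X$ for every unit vector $u$, with equality at $u=z/\|z\|_X$. Because $D^\ast f(z)=\|Df(z)\|$ for Fréchet differentiable mappings, the growth bound of the corollary is exactly $D^\ast f(z)\le C_f\,w(z)^{\alpha-1}$ and the Hölder bound is $d_Y(f(z),f(w))\le C'_f\,\|z-w\|_X^{\alpha}$. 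Thus, once the hypotheses of Corollary~\ref{CORO.HL.ALPHA} are verified with $K=1$ and $M=\tfrac4\alpha$ (for $\alpha\in(0,1)$), the asserted equivalence follows, together with the constants $C'_f\le MC_f=\tfrac4\alpha C_f$ and $C_f\le\tfrac{2K}{\alpha}C'_f=\tfrac2\alpha C'_f$.

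Two things must be checked. First, $f\in\mathrm{RO}^1_w(B_X(0,1),Y)$. The regular-oscillation inequality with constant $K=1$ is precisely the content of the Schwarz-lemma computation of Section~2 (bounded analytic mappings lie in $\mathrm{RO}^1_d$, and this argument survives verbatim for $Y$-valued holomorphic maps by restricting to complex lines); what remains is boundedness on the balls $B_X(x,r)$, $r<w(x)$. In the Hölder case this is immediate, a Hölder map being bounded on the bounded set $B_X(0,1)$. In the growth case one integrates $\|Df\|$ along the radius from $0$ to $z$ to obtain the global bound $\|f(z)-f(0)\|_Y\le\int_0^{\|z\|_X}C_f(1-t)^{\alpha-1}\,dt\le C_f/\alpha$; in particular the infinite-dimensional difficulty that balls need not be compact does not intervene.

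The heart of the matter is the integral condition~\eqref{EQ.COND.ALPHA}: for each pair $z,w$ one must produce a rectifiable $\gamma$ with $\int_\gamma w^{\alpha-1}\le\tfrac4\alpha\|z-w\|_X^\alpha$. The naive straight segment fails, for when $z,w$ sit near the sphere while their chord is far longer than their distance to it, the segment hugs $\partial B_X(0,1)$ and $\int(1-\|\cdot\|_X)^{\alpha-1}$ blows up relative to $\|z-w\|_X^\alpha$; the remedy is to route the curve inward. Writing $\rho=\|z-w\|_X$ and $\hat z=z/\|z\|_X$, for $\rho<\tfrac12$ I would use the three-piece path: a radial segment pulling $z$ in to $z^{\ast}=(1-\rho)\hat z$ (only when $\|z\|_X>1-\rho$), then the straight segment from $z^{\ast}$ to $w^{\ast}$, then a radial segment out to $w$. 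Each radial piece contributes at most $\int_0^{\rho}u^{\alpha-1}\,du=\rho^\alpha/\alpha$, while on the middle segment $1-\|\cdot\|_X\ge\rho$ by convexity, so its contribution is at most $\rho^{\alpha-1}\|z^{\ast}-w^{\ast}\|_X$. The decisive quantitative input is $\|z^{\ast}-w^{\ast}\|_X\le 2\rho$, which I would derive from the normalization estimate $\|\hat z-\hat w\|_X\le 2\|z-w\|_X/\max(\|z\|_X,\|w\|_X)$ together with $\max(\|z\|_X,\|w\|_X)>1-\rho$. This gives a middle contribution $\le 2\rho^\alpha$ and a total of $(\tfrac2\alpha+2)\rho^\alpha<\tfrac4\alpha\rho^\alpha$, the last inequality being $2<\tfrac2\alpha$ for $\alpha<1$. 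For $\rho\ge\tfrac12$ a single inward path through the origin gives $\int_\gamma w^{\alpha-1}\le\tfrac2\alpha$, and since $\rho^\alpha\ge(1/2)^\alpha\ge\tfrac12$ this too is at most $\tfrac4\alpha\rho^\alpha$.

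I expect the main obstacle to be exactly this curve construction, and more precisely the bookkeeping that forces every case under the single constant $\tfrac4\alpha$: the degenerate subcases in which one or both of $z,w$ already lie deep inside the ball, and the crossover at $\rho=\tfrac12$, must each be checked to respect the bound, with the estimate $\|z^{\ast}-w^{\ast}\|_X\le 2\rho$ doing the essential work. Everything else—identifying $w$ with the distance function, pinning down $K=1$, and passing from \eqref{EQ.COND.ALPHA} to the stated constants via Corollary~\ref{CORO.HL.ALPHA}—is routine.
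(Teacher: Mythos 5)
Your proposal is correct, and at the structural level it follows the same route as the paper: identify $w(z)=1-\|z\|_X$ with the distance function, get $f\in\mathrm{RO}^1_w$ from the Schwarz--lemma argument of Section~2, and feed $K=1$, $M=\tfrac4\alpha$ into Corollary~\ref{CORO.HL.ALPHA} to read off both constants. Where you diverge is the one step that carries real content: the verification of the integral condition \eqref{EQ.COND.ALPHA}. The paper does not construct the curve at all -- it cites Gehring and Martio \cite{GEHRING.AASF}, p.~205, for a curve with $\int_\gamma d^{\alpha-1}\le \frac{\pi}{\alpha 2^\alpha}\|x-y\|^\alpha$ and observes $\frac{\pi}{2^\alpha}\le 4$; you instead build the curve explicitly (radial pull-in to the sphere of radius $1-\rho$, chord, radial push-out), and your estimates check out: the normalization inequality $\|\hat z-\hat w\|\le 2\rho/\max(\|z\|,\|w\|)$ does give $\|z^\ast-w^\ast\|\le 2\rho$ in every subcase (including the mixed one, via $\|(1-\rho)\hat z-z\|<\rho$), the chord stays in the ball of radius $1-\rho$ by convexity, and the totals $\bigl(\tfrac2\alpha+2\bigr)\rho^\alpha$ for $\rho<\tfrac12$ and $\tfrac2\alpha\le\tfrac4\alpha\rho^\alpha$ for $\rho\ge\tfrac12$ both sit under $\tfrac4\alpha\rho^\alpha$. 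So your argument is self-contained where the paper's is by reference, at the cost of the case bookkeeping you flagged. A second genuine improvement on your side: you address why $f$ satisfies the boundedness clause in the definition of $\mathrm{RO}^1_w$ (Hölder $\Rightarrow$ bounded; growth $\Rightarrow$ bounded by radial integration), which matters in infinite dimensions where balls are not compact and which the paper passes over silently when it invokes its Lemma on \emph{bounded} analytic mappings.
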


\begin{proof}
That the  unit ball  $B_X(0,1)$ in the Hilbert space  $X$  satisfies the condition of the preceding theorem with the constant    $M = \frac 4\alpha$,   may be proved  as in the work of Gehring
and Martio \cite{GEHRING.AASF} at  the  page 205. For $x,y\in B_X(0,1)$,         as in the cited work,  one finds a curve $\gamma\in  \Gamma(x,y)$  such that
\begin{equation*}
\int_{\gamma} d^{\alpha-1}\le \frac{\pi}{\alpha 2^\alpha}\|x-y\|^{\alpha}.
\end{equation*}
where $d$ is the distance function (from the boundary of $B_X(0,1)$). The constant which appears above  is   $\le \frac {4}\alpha$, so it follows  the first inequality between  constants.  For
the reverse estimate we only have to note that  $f$  belongs  to the class $\mathrm {RO}^1_d(B_X(0,1), Y)$  and apply the preceding corollary.
\end{proof}

\begin{theorem}\label{TH.2.2}
Let     $(X,d_X)$       be a    $\sigma$-compact metric space,    $w$  a weight on $X$ such that for  every $\varepsilon>0$   there exists a compact set                $K = K_\varepsilon$ with
\begin{equation*}
w(z) < \varepsilon ,\quad  z\in X\backslash K,
\end{equation*}
and  $\varphi \in C[0,\infty)$   continuously differentiable and concave on   $(0,\infty)$,     with  $\varphi(0)=0$ and  $\varphi'(t)>0$, $t\in(0,\infty)$.  If a regularly oscillating mapping
$f:X\to Y$, where $(Y,d_Y)$ is another metric space, belongs to  $\Lambda_\varphi (X,Y)$, then there exists  a constant  $C$  such that
\begin{equation*}
D^\ast f (z) \le C \varphi ' (w(z)),\quad  z\in X,
\end{equation*}
i.e., $f\in B_{\varphi ' \circ w}$.
\end{theorem}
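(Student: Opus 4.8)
The plan is to argue by contradiction, after first reducing the whole assertion to a local estimate in the region where the weight is small; this is the only place the crude bound fails, and it is also the reason (as anticipated in the remark preceding the theorem) that one cannot expect the resulting constant $C$ to be controlled by $\|f\|_{\Lambda_\varphi(X,Y)}$.

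First I would record the elementary bound obtained by feeding the $\Lambda_\varphi$-estimate into the regular oscillation inequality. Writing $C_f=\|f\|_{\Lambda_\varphi(X,Y)}$ and $K=K_f$, and using that $\varphi$ is increasing together with \eqref{EQ.RO}, for every $z\in X$ and every $0<r<w(z)$ one has
\begin{equation*}
D^\ast f(z)\le\frac{K}{r}\sup_{y\in B_X(z,r)}d_Y(f(z),f(y))\le\frac{KC_f\,\varphi(r)}{r}.
\end{equation*}
Since $\varphi$ is concave with $\varphi(0)=0$, the quotient $\varphi(r)/r$ is decreasing, so letting $r\uparrow w(z)$ gives the naive bound $D^\ast f(z)\le KC_f\,\varphi(w(z))/w(z)$. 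Introducing $g(s)=\varphi(s)/\bigl(s\varphi'(s)\bigr)$, which satisfies $g\ge1$ because $\varphi(s)=\int_0^s\varphi'\ge s\varphi'(s)$, this reads $D^\ast f(z)\le KC_f\,g(w(z))\,\varphi'(w(z))$.

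Next I would isolate the region where this already suffices. The decay hypothesis forces $\{w\ge\varepsilon\}\subseteq K_\varepsilon$, and since $w$ is continuous this set is compact; taking $\varepsilon=1$ shows in particular that $w$ is bounded, say $w\le W_0<\infty$. Hence on $\{w\ge\varepsilon\}$ the continuous function $g$ is bounded by some $G_\varepsilon$, and the naive bound yields $D^\ast f(z)\le KC_fG_\varepsilon\,\varphi'(w(z))$ there. Moreover, directly from $\Lambda_\varphi$ and the definition of $D^\ast f$ one gets $D^\ast f(z)\le C_f\lim_{s\to0^+}\varphi(s)/s=C_f\,\varphi'(0^+)$, so if $\varphi'(0^+)<\infty$ the conclusion follows at once from $\varphi'(w(z))\ge\varphi'(W_0)>0$. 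The entire difficulty is therefore concentrated in the set where $w$ is small in the genuinely hard case $\varphi'(0^+)=\infty$ with $g(s)\to\infty$ as $s\to0^+$, which does occur (for instance $\varphi(t)=1/\log(1/t)$ near $0$): there the factor $g(w(z))$ is unbounded and the naive estimate is too weak.

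To close this gap I would suppose the conclusion fails, producing a sequence $(z_n)$ with $D^\ast f(z_n)/\varphi'(w(z_n))\to\infty$. By the previous paragraph the ratio is bounded on each $\{w\ge\varepsilon\}$, so necessarily $w(z_n)\to0$; since every compact $L\subseteq X$ has $\min_L w>0$ and hence lies in some $\{w\ge\varepsilon\}$, the points $z_n$ eventually leave every compact set. The core of the argument, and the step I expect to be the main obstacle, is a refinement following Stegbuchner \cite{STEGBUCHNER.AASF} of the crude oscillation estimate $\sup_{y\in B_X(z,r)}d_Y(f(z),f(y))\le C_f\varphi(r)$ into a first-order one of the form $\sup_{y\in B_X(z,r)}d_Y(f(z),f(y))\le C\,r\,\varphi'(w(z))$ for $r$ comparable to $w(z)$. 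Such a bound is essentially equivalent to the desired conclusion, so it cannot come from a single use of \eqref{EQ.RO}; instead one iterates \eqref{EQ.RO} across a geometric sequence of radii $r_k\sim2^{-k}w(z)$, at each scale estimating the increment of $f$ by the derivative-type quantity furnished at the previous coarser scale rather than by $\varphi$ itself, and the concavity of $\varphi$ (through $\varphi'$ decreasing and $\varphi(cs)\ge c\varphi(s)$ for $0<c<1$) is what should force the resulting series to converge to a finite multiple of $\varphi'(w(z))$. Making this telescoped series converge, precisely where the slow growth of $\varphi$ works against concavity, is the delicate point, and the $\sigma$-compactness of $X$ together with the decay of $w$ is what legitimises the reduction to the small-weight region and allows the iteration to be run along paths running out toward the set where $w$ vanishes. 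Inserting the refined oscillation bound back into \eqref{EQ.RO} then gives $D^\ast f(z_n)\le C\varphi'(w(z_n))$ with $C$ independent of $n$, contradicting the choice of $(z_n)$ and yielding $f\in\mathrm{B}_{\varphi'\circ w}(X,Y)$.
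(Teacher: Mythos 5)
Your opening two paragraphs are correct, and in fact they do the same job as the first half of the paper's proof more cleanly: the paper proves that $Q=D^\ast f/(\varphi'\circ w)$ is bounded on compact sets via a convergent-subsequence argument, while your bound $D^\ast f(z)\le KC_f\,g(w(z))\,\varphi'(w(z))$ with $g(s)=\varphi(s)/(s\varphi'(s))\ge 1$, together with the compactness of $\{w\ge\varepsilon\}$, yields the same conclusion (any compact set lies in some $\{w\ge\varepsilon\}$), and your disposal of the case $\varphi'(0^+)<\infty$ is fine. But the proof effectively ends there. The heart of the matter is the step you yourself call ``the main obstacle'': the refined oscillation bound $\sup_{y\in B_X(z,r)}d_Y(f(z),f(y))\le C\,r\,\varphi'(w(z))$, which, as you concede, is essentially equivalent to the desired conclusion. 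You never prove it; you only express the expectation that iterating \eqref{EQ.RO} over radii $r_k\sim 2^{-k}w(z)$ will make a telescoped series converge. The mechanism you sketch cannot run as described: \eqref{EQ.RO} converts oscillation bounds into bounds on $D^\ast f$, never the reverse, and the only tool in this paper for passing from derivative bounds back to oscillation bounds is integration along rectifiable curves (Lemma \ref{LE.IMAGE}, used this way in Theorem \ref{TH.1}), whereas Theorem \ref{TH.2.2} assumes no rectifiable connectivity of $X$ at all. So what you have is a correct reduction followed by an unproved claim; this is a genuine gap.

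For comparison, the paper does not prove a pointwise refined bound either. After the compactness step it assumes $Q(z_n)\to\infty$, deduces $w(z_n)\to 0$ from the decay hypothesis, and extracts a subsequence with $Q(z_{n_k})\ge 2^{k+2}$ and $\varphi(\delta_{k+1})\le\varphi(\delta_k)/2$, where $\delta_k=w(z_{n_k})/2$; a single use of \eqref{EQ.RO} plus concavity is then claimed to force the oscillation on $B(z_{n_k},\delta_k)$ to be at least $2^k\varphi(\delta_k)/K$, contradicting the $\Lambda_\varphi$ bound $C_f\varphi(\delta_k)$ once $2^k>KC_f$. Your instinct that this step is the truly delicate point is, however, vindicated: in that chain the paper invokes $\varphi'(\delta_k)\ge(\varphi(\delta_k)-\varphi(\delta_{k+1}))/(\delta_k-\delta_{k+1})$, i.e. the concavity inequality $\varphi'(t)\ge(\varphi(s)-\varphi(t))/(s-t)$ with $s=\delta_{k+1}<t=\delta_k$, where for a concave function it holds in the opposite direction (it also replaces $\varphi'(w(z_{n_k}))$ by the larger number $\varphi'(\delta_k)$). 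Moreover, no argument can close this gap, because the statement fails without a condition of the type \eqref{EQ.COND.A}: take $X=\mathbb{D}$, $w(z)=1-|z|$, $\varphi(t)=(1+\log(1/t))^{-1}$ for $t\le 1/e$ extended affinely beyond $1/e$ (so $\varphi$ is $C^1$, concave, increasing, $\varphi(0)=0$, $\varphi'>0$), and the lacunary series $f(z)=\sum_{k\ge 1}2^{-k}z^{n_k}$, $n_k=2^{2^k}$. Then $f$ is bounded and analytic, hence in $\mathrm{RO}^1_w(\mathbb{D},\mathbb{C})$ by the Schwarz-lemma argument of Section 2; the standard lacunary estimate gives $f\in\Lambda_\varphi$ (both partial sums are $O(2^{-j})$ when $n_j\le |z-w|^{-1}<n_{j+1}$, while $\varphi(|z-w|)\ge 2^{-j-2}$); yet at $x_j=1-1/n_j$ one has $|f'(x_j)|\ge 2^{-j}n_j/e$ and $\varphi'(1/n_j)=n_j(1+2^j\log 2)^{-2}$, so $Q(x_j)\gtrsim 2^j\to\infty$. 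Thus the step missing from your proposal is not merely unproved but unprovable, and a correct version of Theorem \ref{TH.2.2} requires an additional regularity hypothesis on $\varphi$, such as \eqref{EQ.COND.A} in Theorem \ref{TH.2.1}.
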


\begin{proof}
Let us denote
\begin{equation*}
Q(z)  =  \frac{D^\ast f(z)}{\varphi'(w(z))},\quad z\in X.
\end{equation*}
We shall  firstly   show  that  $Q$ is bounded on every  compact set  $K_1 \subseteq X$.

Assume  contrary,  i.e.,   that  a compact  set  $K_1$  contains a sequence $(z_n)_{n\in\mathbb{N}}$  such that  $(Q(z_n))_{n\in\mathbb {N}}$ diverge   to infinity.  We can select a convergent
subsequence of  $(z_n)_{n\in\mathbb{N}}$. Denote it in the same way, and let it  converge to $z\in K_1$.  Let $r = \frac { w (z)}2$. If  an integer $n$ is enough big, we  have  $z_n\in B(z,r)$.
Since  $\lim_{n\to \infty} w(z_{n}) =  w(z)$, we have
\begin{equation*}
\lim_{n\to \infty}\varphi'(w(z_{n})) =  \varphi'( w(z))> \frac { \varphi'( w(z))}2.
\end{equation*}
Therefore, if $n$ is enough big,  there holds   $\varphi'(w(z_{n})) > \frac { \varphi'( w(z))}2$.  Let us denote
\begin{equation*}
r_n = \min\left\{\frac { w(z_n) }2,\frac r2\right\} = \min\left\{\frac { w(z_n) }2, \frac { w(z) }4\right\} \ge \frac  m4,
\end{equation*}
where $m= \min_{\zeta \in K} w(\zeta)$.   Since $f\in \mathrm {RO}^K_w(X,Y)$, we have
\begin{equation*}\begin{split}
Q(z_n) & = \frac{D^\ast f(z_{n})}{\varphi'(w(z_{n}))} \le \frac{D^\ast f(z_{n})}{\frac {\varphi'(w(z))}2}
\\&\le \frac {2}{{\varphi'(w(z))}} \frac K {r_n}\sup_{x\in B(z_{n},r_n)} d_Y (f(x),  f(z_{n}))
\\& \le \frac {2K}{\varphi'(w(z))} \frac1{r_n} \sup_{x\in B(z_{n},r_n)} (d_Y (f(x), f(z)) + d_Y(f(z),  f(z_{n})))
\\&\le \frac {2K}{\varphi'(w(z))} \frac 1{\frac m4}2\sup _{y\in {B}(z,r)} d_Y (f(y), f(z)),
\end{split}\end{equation*}
which is  a  contradiction, since  the  sequence  $(Q(z_n))_{n\in\mathbb{N}}$  is unbounded.

If $X$ is a compact metric space,    we may finish the proof at this moment. In the sequel we suppose that this is not the case,            and  let $X = \bigcup_{n\in \mathbb{N}} K_n$,  where
$(K_n)_{n\in \mathbb{N}}$   is a strictly   increasing sequence of  compact sets  in  the sense that         $K_1\subset K_2\subset \cdots \subset K_n\subset K_{n+1}\subset \cdots$

Assume that there exists a sequence              $(z_n)_{n\in \mathbb{N}}$ in $X$ such that  $\lim _{n\to \infty} Q (z_n) = \infty$,          and for  the sake  of  simplicity,  let  us denote
\begin{equation*}
Q_n = Q(z_n)  = \frac{D^\ast f(z_n)}{\varphi'(w(z_n))},\quad n\in \mathbb{N}.
\end{equation*}
Since  $Q (z)$  is bounded on every  compact subset  of  $X$,   the  set  $K_m$, $m\in \mathbb{N}$, may contain only a finite members of the sequence $(z_n)_{n\in\mathbb{N}}$, so  according to
our  assumption  on the weight   $w$,  we must  have  $\lim_{n\to\infty } w(z_n) = 0$.

We will construct inductively a subsequence $(z_{n_k})_{k\in\mathbb{N}}$ of $(z_n)_{n\in\mathbb{N}}$: Let $n_1$ be such that $Q_{n_1}\ge 4$.             Assume that we have found $n_k$. Denote
$\delta_k = \frac { w (z_{n_k})}2$.  Let $\delta_{k+1}^\ast\in (0,\delta_k)$ be  the unique  number  such that
\begin{equation*}
\varphi (\delta _{k+1} ^\ast) =  \frac {\varphi (\delta _k)}2
\end{equation*}
(which exists,   since  $\varphi$ is continuous and  increasing on $[0,\infty)$).            Now,  let $n_{k+1}> n_k$ be such that $0< w (z_{n_{k+1}}) < \delta _{k+1}$  (since the     sequence
$(w(z_n))_{n\in \mathbb{N}}$ converge to $0$, it is possible to  find a such member), and  $Q_ {n_{k+1}} \ge 2^{k+2}$     (recall that $(Q_n)_{n\in\mathbb{N}}$ diverge to $\infty$).

For the sake of simplicity  let the subsequence $(z_{n_k})_{k\in\mathbb{N}}$ be  denote by $(z_k )_{k\in \mathbb{N}}$. By the construction of the sequence $(z_k)_{k\in \mathbb{N}}$     we have
\begin{equation*}
D^\ast f (z_{k}) \ge 2^{k+1} \varphi' (w (z_{k})),\quad \varphi (\delta_{k+1})\le \frac{ \varphi (\delta_k)}2, \quad k\in \mathbb{N}.
\end{equation*}

Since  $f$ belongs  to the class $\mathrm{RO}_w^K(X,Y)$, and since $\delta_k< w(z_k)$, we have
\begin{equation*}
D^\ast f (z_{k}) \le \frac  K {\delta _k} \sup_{x\in B(z_k,\delta_k)} d_Y (f(x),  f(z_{k})).
\end{equation*}
Since    $\varphi$  is concave on $(0,\infty)$, we have
\begin{equation*}\label{EQ.CONCAVE}
\varphi'(t)\ge \frac{\varphi(s) - \varphi(t)}{s-t},\quad s>t>0.
\end{equation*}
Therefore,
\begin{equation*}\begin{split}
\sup_{x\in B(z_k,\delta_k)}d_Y (f(x), f(z_{k}))& \ge\frac{\delta_k}{K} D^\ast f (z_{k}) \\& \ge\frac{\delta_k}{K} 2^{k+1} \varphi' (\delta_k) 
\\& \ge \frac{\delta_k}{K}  2^{k+1}  \frac{\varphi(\delta_k) - \varphi (\delta_{k+1})}{\delta_k - \delta_{k+1}}
\\&\ge \frac { 2^{k+1}} K \frac{\delta_k}{\delta_k - \delta_{k+1}}  (\varphi(\delta_k) - \varphi (\delta_{k+1})) 
\\&\ge \frac {2^{k+1}} K   (\varphi(\delta_k) - \varphi (\delta_{k+1})) 
\\& = \frac {2^{k+1}} K (\varphi(\delta_k) - \frac 12 \varphi (\delta_{k}))\\& =  \frac  {2^{k}}K \varphi (\delta_k).
\end{split}\end{equation*}
However, if  $k$ is  enough large  integer, the just  obtained inequality is opposite to the following one.

Since  $f\in \Lambda_\varphi (X,Y)$ (recall that $\varphi$ is an  increasing function) we have
\begin{equation*}\begin{split}
\sup_{x\in B(z_k,\delta_k)}d_Y (f(x),  f(z_{k})) &  \le \|f\|_{\Lambda_\varphi (X,Y)} \sup_{x\in B(z_k,\delta_k)} \varphi (d_X (x,z_k))
 \\&  \le   \|f\|_{\Lambda_\varphi (X,Y)}  \varphi (\delta_k).
\end{split}\end{equation*}

We have just reached a contradiction. Therefore,   the sequence         $(z_n)_{n\in \mathbb{N}}$        for which            $(Q(z_n))_{n\in\mathbb{N}}$              diverge does   not  exist.
\end{proof}

\subsection*{Acknowledgements}
The author             is        thankful   to  the referee of this paper for useful advices,    for carefully reading,      and finding  several errors in the first version of the  manuscript.

\end{document}